\newcommand{\abs}[1]{\left| #1 \right|}
\newcommand{\C}{\mathbb{C}}
\newcommand{\N}{\mathbb{N}}
\newcommand{\norm}[1]{\|{#1}\|}
\newcommand{\p}{\partial}
\newcommand{\R}{\mathbb{R}}
\newcommand{\set}[2]{\left\{{#1}:{#2}\right\}}
\newcommand{\trace}[1]{\textrm{tr}\left(#1\right)}
\newcommand{\Opw}[1]{\textrm{Op}^w_h\left({#1}\right)}
\theoremstyle{plain}
\newtheorem{theorem}[subsection]{Theorem}
\newtheorem{proposition}[subsection]{Proposition}
\theoremstyle{definition}
\theoremstyle{conjecture}
\theoremstyle{exercise}
\numberwithin{equation}{section}
\title{$L^p$-bounds for semigroups generated by non-elliptic quadratic differential operators}
\author{Francis White}
\affil{University of California Los Angeles}
\date{}
\begin{document}

\maketitle

\begin{abstract}
    In this note, we establish $L^p$-bounds for the semigroup $e^{-tq^w(x,D)}$, $t \ge 0$, generated by a quadratic differential operator $q^w(x,D)$ on $\R^n$ that is the Weyl quantization of a complex-valued quadratic form $q$ defined on the phase space $\R^{2n}$ with non-negative real part $\textrm{Re} \, q \ge 0$ and trivial singular space. Specifically, we show that $e^{-tq^w(x,D)}$ is bounded $L^p(\R^n) \rightarrow L^q(\R^n)$ for all $t > 0$ whenever $1 \le p \le q \le \infty$, and we prove bounds on $\norm{e^{-tq^w(x,D)}}_{L^p \rightarrow L^q}$ in both the large $t \gg 1$ and small $0 < t \ll 1$ time regimes. Regarding $L^p \rightarrow L^q$ bounds for the evolution semigroup at large times, we show that $\norm{e^{-tq^w(x,D)}}_{L^p \rightarrow L^q}$ is exponentially decaying as $t \rightarrow \infty$, and we determine the precise rate of exponential decay, which is independent of $(p,q)$. At small times $0 < t \ll 1$, we establish bounds on $\norm{e^{-tq^w(x,D)}}_{L^p \rightarrow L^q}$ for $(p,q)$ with $1 \le p \le q \le \infty$ that are polynomial in $t^{-1}$.
\end{abstract}

{\let\thefootnote\relax\footnote{{\emph{AMS Subject Classifications}}: 35Q40, 	35S30, 47D06, 47D08}}

\section{Introduction and Statement of Results}

In this note, we prove $L^p$-bounds for the solution operator $e^{-tq^w(x,D)}$ of the Schr\"{o}dinger initial value problem
\begin{align} \label{Schrodinger_initial_value_problem}
	\begin{cases}
		\p_t u(t,x) + q^w(x,D)u(t,x) = 0, \ \ (t,x) \in [0,\infty) \times \R^n, \\
		u(0,x) = u_0(x), \ \ x \in \R^n,
	\end{cases}
\end{align}
where $u_0 \in L^2(\R^n)$ is the initial data, $q = q(x,\xi)$ is a complex-valued quadratic form on the phase space $\R^{2n} = \R^n_x \times \R^n_\xi$ with non-negative real part $\textrm{Re} \ q \ge 0$, and $q^w(x,D)$ is the Weyl quantization of $q(x,\xi)$, defined by
\begin{align} \label{Weyl_quantization_quadratic_form}
	q^w(x,D)v(x) = (2\pi)^{-n} \int_{\R^n} \int_{\R^n} e^{i(x-y) \cdot \xi} q \left(\frac{x+y}{2}, \xi \right) v(y) \, dy \, d\xi, \ \ v \in \mathcal{S}'(\R^n),
\end{align}
in the sense of distributions. Operators of the form (\ref{Weyl_quantization_quadratic_form}) are quadratic differential operators with a simple, explicit expression. This is because the Weyl quantization of a quadratic monomial of the form $x^\alpha \xi^\beta$, where $\alpha, \beta \in \N^n$, $\abs{\alpha+\beta}=2$, is
\begin{align}
	\frac{x^\alpha D^\beta + D^\beta x^\alpha}{2}, \ \ D:=\frac{1}{i} \p.
\end{align}
The class of evolution equations of the form (\ref{Schrodinger_initial_value_problem}) contains a number of familiar examples, such as the free Schr\"{o}dinger equation where $q(x,\xi) = i \abs{\xi}^2$, $(x,\xi) \in \R^{2n}$, the quantum harmonic oscillator, where $q(x,\xi) = i(\abs{x}^2+\abs{\xi}^2)$, $(x,\xi) \in \R^{2n}$, the heat equation, where $q(x,\xi) = \abs{\xi}^2$, $(x,\xi) \in \R^{2n}$, and the Kramers-Fokker-Planck equation with a quadratic potential, where $q(x,v,\xi,\eta) = \eta^2+\frac{1}{4}v^2+i(v \cdot \xi-a x \cdot \eta)$, for $(x,v,\xi, \eta) \in \R^{4n} = \R^{2n}_{x,v} \times \R^{2n}_{\xi, \eta}$ and $a \in \R \backslash \{0\}$ a constant. From the work \cite{GeneralizedMehler}, it is known that the operator $q^w(x,D)$, regarded as an unbounded operator on $L^2(\R^n)$ equipped with the maximal domain
\begin{align} \label{maximal_domain}
	D_{\textrm{max}} = \set{u \in L^2(\R^n)}{q^w(x,D)u \in L^2(\R^n)},
\end{align}
is maximally accretive and generates a strongly continuous contraction semigroup $G(t):=e^{-tq^w(x,D)}$, $t \ge 0$, on $L^2(\R^n)$. We may regard $G(t)$ as the solution operator for the problem (\ref{Schrodinger_initial_value_problem}). Given that a wide range of physical processes may be modeled by equations of the form (\ref{Schrodinger_initial_value_problem}), it is of interest to understand the $L^p \rightarrow L^q$ mapping properties of the evolution semigroup $G(t)$ and to obtain bounds for the operator norm $\norm{G(t)}_{L^p \rightarrow L^q}$ at various time regimes. Let us mention that the study of $L^p$-bounds for semigroups generated by self-adjoint Schr\"{o}dinger operators has a long and rich tradition in the field of mathematical physics. We refer to \cite{SimonDavies84}, \cite{SimonDavies91}, \cite{Simon80}, and \cite{Simon81} for some fundamental results in this area. In particular, $L^p$-bounds for the propagator $G(t)$ were obtained in \cite{KochTataru} in the case when (\ref{Schrodinger_initial_value_problem}) is the time evolution of the quantum harmonic oscillator.

In this note, we shall be primarily interested in obtaining $L^p \rightarrow L^q$ bounds for $G(t)$ in the case when the quadratic form $q$ is non-elliptic. In order to recount the known results in this direction, we pause to recall the notion of the singular space of a complex-valued quadratic form $q$ on $\R^{2n}$ with non-negative real part $\textrm{Re} \ q \ge 0$. Let $\R^{2n}$ be equipped with the standard symplectic form
\begin{align} \label{Hamilton matrix of q}
    \sigma((x,\xi), (y,\eta)) = \xi \cdot y - x \cdot \eta, \ \ \ (x,\xi), \ (y,\eta) \in \R^{2n}.
\end{align}
Suppose $q: \R^{2n} \rightarrow \C$ is a complex-valued quadratic form with $\textrm{Re} \ q \ge 0$ and let $q(\cdot, \cdot)$ denote its symmetric $\C$-bilinear polarization. Because $\sigma$ is nondegenerate, there is a unique $F \in \textrm{Mat}_{2n \times 2n}(\C)$ such that
\begin{align} \label{def_Hamilton_matrix}
    q((x,\xi), (y,\eta)) = \sigma((x,\xi), F(y,\eta))
\end{align}
for all $(x,\xi), (y,\eta) \in \R^{2n}$. This matrix $F$ is called the \emph{Hamilton map} or \emph{Hamilton matrix of $q$} (see Section 21.5 of \cite{HormanderIII}). Explicitly, the Hamilton matrix of $q$ is given by
\begin{align}
	F = \frac{1}{2} H_{q},
\end{align}
where $H_q = \left(q'_{\xi}, -q'_x\right)$ is the Hamilton vector field of $q$, viewed as a linear map $\C^{2n} \rightarrow \C^{2n}$. Let
\begin{align*}
    \textrm{Re} \ F = \frac{F+\overline{F}}{2}, \ \ \textrm{Im} \ F = \frac{F-\overline{F}}{2i}
\end{align*}
be the real and imaginary parts of $F$ respectively. The \emph{singular space $S$ of $q$} is defined as the following finite intersection of kernels:
\begin{align} \label{definition of singular space}
    S = \left(\bigcap_{j=0}^{2n-1} \ker{\left[(\textrm{Re} \ F)(\textrm{Im} \ F)^j\right]} \right) \cap \R^{2n}.
\end{align}
The singular space was first introduced by M. Hitrik and K. Pravda-Starov in \cite{QuadraticOperators} where it arose naturally in the study of spectra and semi-group smoothing properties for non-self adjoint quadratic differential operators. The concept of the singular space has since been shown to play a key role in the understanding of hypoelliptic and spectral properties of non-elliptic quadratic differential operators. See for instance \cite{SemiclassicalHypoelliptic}, \cite{EigenvaluesAndSubelliptic}, \cite{ContractionSemigroup}, \cite{SubellipticEstimatesQuadraticDifferentialOperators}, \cite{NonEllipticQuadraticFormsandSemiclassicalEstimates}, and \cite{SpectralProjectionsAndResolventBounds}. Recent work has also shown that the singular space is vital for the description of the propagation of microlocal singularities for the evolution (\ref{Schrodinger_initial_value_problem}). We refer the reader to \cite{ExponentialSingularities}, \cite{time_dependent}, \cite{GaborSingularities}, \cite{PolynomialSingularities}, and \cite{Global_Analytic}, as well as \cite{PartialGS} and \cite{alphonse2020polar}.

Let $q$ be a complex-valued quadratic form on $\R^{2n}$ with non-negative real part $\textrm{Re} \, q \ge 0$. Let $S$ be the singular space of $q$. The quadratic form $q$ is said to be \emph{elliptic} if
\begin{align} \label{definition_ellipticity}
q(X) = 0, \ \ X \in \R^{2n} \implies X = 0,	
\end{align}
If (\ref{definition_ellipticity}) fails to hold, then we say that $q$ is \emph{non-elliptic}. To the best of our knowledge, there are currently only two general results regarding $L^p \rightarrow L^q$ bounds for the semigroup $G(t)$ in the case when $q$ is non-elliptic. First, in Theorem 1.2.3 of \cite{QuadraticOperators}, it was established that $\norm{G(t)}_{L^2 \rightarrow L^2}$ decays exponentially as $t \rightarrow \infty$ whenever $S$ is symplectic and distinct from the entire phase space $\R^{2n}$. In other words, if $S$ is symplectic and $S \neq \R^{2n}$, then there are $C,c>0$ such that
\begin{align} \label{L^2 exponential decay bound}
	\norm{G(t)}_{L^2 \rightarrow L^2} \le C e^{-ct}, \ \ t \ge 0.
\end{align}
Thanks to the subsequent work \cite{return2equilibrium}, it is also known that if $S$ is trivial, i.e. $S = \{0\}$, then the optimal rate of exponential decay of $\norm{G(t)}_{L^2 \rightarrow L^2}$ is the quantity $\gamma$ defined below in Theorem \ref{main_theorem}. The second general result concerning $L^p-L^q$ bounds for $G(t)$ is Theorem 1.2 of \cite{SubellipticEstimates}, which yields the following $L^2-L^\infty$ estimate: if $S=\{0\}$, then, for every $s>n/2$, there is $C>0$ such that
\begin{align} \label{L^2 L^infty bound}
	\norm{G(t)}_{L^2 \rightarrow L^\infty} \le C t^{-\frac{1}{2}(2k_0+1)(2n+s)}, \ \ 0<t \ll 1,
\end{align}
where $k_0 \in \{0,1, \ldots, 2n-1\}$ is the smallest non-negative integer such that
\begin{align} \label{def_k_0}
	\bigcap_{j=0}^{k_0} \ker{\left[(\textrm{Re} \ F)(\textrm{Im} \ F)^j\right]} \cap \R^{2n}	= \{0\}.
\end{align}
Our goal in the present work is to prove bounds for the operator norm $\norm{G(t)}_{L^p \rightarrow L^q}$ with $(p,q)$ more general than $(2,2)$ and $(2,\infty)$. The main result of this note refines and extends the bounds (\ref{L^2 exponential decay bound}) and (\ref{L^2 L^infty bound}) under the assumption that $S = \{0\}$. We recall from Theorem 1.2.2 of \cite{QuadraticOperators} that when $S=\{0\}$ the spectrum of the quadratic differential operator $q^w(x,D)$ is only composed of eigenvalues of finite algebraic multiplicity with
\begin{align} \label{explicit_spectrum}
	\textrm{Spec}(q^w(x,D)) = \set{\sum_{\substack{ \lambda \in \textrm{Spec}(F) \\ \textrm{Im}(\lambda) > 0}} (r_\lambda + 2k_\lambda)(-i \lambda)}{k_\lambda \in \N},
\end{align}
where $r_\lambda$ is the dimension of the space of generalized eigenvectors of the Hamilton matrix $F$ of $q$ in $\C^{2n}$ corresponding to the eigenvalue $\lambda \in \C$. In particular, the eigenvalue of $q^w(x,D)$ obtained by setting $k_\lambda = 0$ for all $\lambda \in \textrm{Spec}(F)$ in (\ref{explicit_spectrum}) is
\begin{align} \label{def_rho}
	\rho = \sum_{\substack{\lambda \in \textrm{Spec}(F) \\ \textrm{Im}(\lambda)>0}} -i r_\lambda \lambda.
\end{align}
We may think of $\rho$ as the `lowest eigenvalue' or `ground state energy' of the operator $q^w(x,D)$.

\begin{theorem} \label{main_theorem}
	Let $q$, $q^w(x,D)$, $G(t)$, $S$, and $F$ be as above. Assume that $S = \{0\}$.	
	\begin{enumerate}
	\item Let $\gamma = \textrm{Re}(\rho)>0$. For every $1 \le p \le q \le \infty$ and $\epsilon>0$, there are constants $C = C_{\epsilon, p, q}>0$ and $c = c_{p,q}>0$, such that
	\begin{align} \label{sharp_exponential_decay}
		c e^{-\gamma t} \le \norm{G(t)}_{L^p \rightarrow L^q} \le C e^{-\gamma t}, \ \ t \ge \epsilon.
	\end{align}
	\item Let $k_0 \in \{0,1, \ldots, 2n-1\}$ be the smallest non-negative integer such that (\ref{def_k_0}) holds. There is a time $0<t_0 \ll 1$ such that for any $1 \le p \le q \le \infty$ we have
	\begin{align} \label{short_time_bound}
		c \le \norm{G(t)}_{L^p \rightarrow L^q} \le C t^{-(2k_0+1)n}, \ \ 0 < t \le t_0,
	\end{align}
	for some constants $C=C_{p,q}>0$ and $c = c_{p,q}>0$. 
\end{enumerate}
\end{theorem}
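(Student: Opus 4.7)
The plan is to establish the lower bounds in (\ref{sharp_exponential_decay}) and (\ref{short_time_bound}) via a spectral/ground-state argument, the short-time upper bound in (\ref{short_time_bound}) via the generalized Mehler representation of $G(t)$, and the large-time upper bound in (\ref{sharp_exponential_decay}) via a semigroup-splitting combined with the $L^2 \to L^2$ exponential decay from \cite{return2equilibrium}. Throughout, the hypothesis $S = \{0\}$ is essential: it makes the spectrum of $q^w(x,D)$ discrete with explicit ground state $\rho$, it renders the Mehler kernel a bona fide rapidly decaying Gaussian for $t > 0$, and it produces a positive spectral gap.

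For the lower bounds, Theorem 1.2.2 of \cite{QuadraticOperators} identifies the spectrum (\ref{explicit_spectrum}), whose smallest real part is $\gamma = \textrm{Re}\,\rho$, realized by the ground-state eigenvalue $\rho$ of (\ref{def_rho}). Standard theory for quadratic operators with trivial singular space (whose eigenfunctions arise as polynomials applied to a Gaussian vacuum) puts the eigenfunction $\phi_0$ associated with $\rho$ in $\mathcal{S}(\R^n)$, so that $\|\phi_0\|_{L^r} \in (0, \infty)$ for every $r \in [1, \infty]$. Since $G(t) \phi_0 = e^{-t \rho} \phi_0$, testing against $\phi_0$ gives
\begin{align*}
    \|G(t)\|_{L^p \to L^q} \ge e^{-\gamma t} \|\phi_0\|_{L^q}/\|\phi_0\|_{L^p},
\end{align*}
which yields the lower bound in (\ref{sharp_exponential_decay}) directly and, via $e^{-\gamma t} \ge e^{-\gamma t_0}$ on $(0, t_0]$, the constant lower bound in (\ref{short_time_bound}).

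For the short-time upper bound, I would work from the generalized Mehler formula of \cite{GeneralizedMehler}: for $0 < t \le t_0$, $G(t)$ is an integral operator with complex Gaussian kernel of the schematic form $K_t(x,y) = c_n (\det A_t)^{-1/2} \exp(-\Psi_t(x,y))$, where $A_t$ and the quadratic form $\Psi_t$ with $\textrm{Re}\,\Psi_t \ge 0$ are determined by the Hamilton flow $e^{-2tF}$; the hypothesis $S = \{0\}$ forces $\textrm{Re}\,\Psi_t$ to be positive definite for $t > 0$. The heart of the matter is a refined prefactor bound
\begin{align*}
    |\det A_t|^{-1/2} \le C\, t^{-(2k_0+1)n}, \qquad 0 < t \le t_0,
\end{align*}
which I would obtain by Taylor-expanding $e^{-2tF}$ at $t = 0$ and tracking how many iterated applications of $\textrm{Im}\,F$ to $\textrm{Re}\,F$ are needed to recover full ellipticity, as governed by the minimality of $k_0$ in (\ref{def_k_0}). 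This directly yields $\|G(t)\|_{L^1 \to L^\infty} = \|K_t\|_{L^\infty} \le C t^{-(2k_0+1)n}$, while integrating $\exp(-\textrm{Re}\,\Psi_t)$ in one variable at a time gives $\|G(t)\|_{L^1 \to L^1}, \|G(t)\|_{L^\infty \to L^\infty} = O(1)$ on $(0, t_0]$. Riesz-Thorin interpolation among these three endpoints then produces (\ref{short_time_bound}) for all $1 \le p \le q \le \infty$.

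For the large-time upper bound, I would combine the sharp $L^2 \to L^2$ decay $\|G(t)\|_{L^2 \to L^2} \le C e^{-\gamma t}$ of \cite{return2equilibrium} with general short-time estimates $\|G(\tau)\|_{L^p \to L^q} \le C_{p, q}$ valid for \emph{all} $1 \le p, q \le \infty$ at any fixed $0 < \tau \le t_0$; these latter follow from Minkowski/Young-type estimates on the Mehler kernel, using its Gaussian decay in each variable, and extend the bounds of part (2) past the constraint $p \le q$. For $t \ge \epsilon$, I would set $\tau = \min\{t_0, \epsilon/3\}$ and use the semigroup law to write $G(t) = G(\tau) \circ G(t - 2\tau) \circ G(\tau)$; bounding the outer factors as $L^p \to L^2$ and $L^2 \to L^q$ by the kernel estimates and the middle factor by the $L^2$-decay yields $\|G(t)\|_{L^p \to L^q} \le C\, e^{-\gamma(t - 2\tau)} \le C_\epsilon e^{-\gamma t}$, completing (\ref{sharp_exponential_decay}). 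The main obstacle is the refined prefactor bound $|\det A_t|^{-1/2} \lesssim t^{-(2k_0+1)n}$, which sharpens the exponent in (\ref{L^2 L^infty bound}) from $\frac{1}{2}(2k_0+1)(2n+s)$ to $(2k_0+1)n$ and demands delicate quantitative control on how rapidly $\textrm{Re}\,F$ acquires ellipticity along iterated products with $\textrm{Im}\,F$; once this is in hand, the rest is standard interpolation and semigroup bookkeeping built on facts already recorded in the cited works.
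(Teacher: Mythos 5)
Your approach for the lower bounds (testing $G(t)$ against the ground-state eigenfunction $\phi_0 \in \mathcal S(\R^n)$) is the same as the paper's, and it is correct. For the upper bounds you take a genuinely different route: the paper conjugates by a metaplectic FBI transform $\mathcal T_\varphi$, uses the explicit formula $\widetilde G(t)u(z) = e^{\frac i2 \mathrm{tr}(M)t}\, u(e^{itM}z)$ on the Bargmann side, writes down the Schwartz kernel of $\mathcal T_\varphi^* \widetilde G(t) \mathcal T_\varphi$, and applies Minkowski and Young to obtain $\norm{G(t)}_{L^p\to L^q} \le C\alpha(t)^{-n}e^{-\gamma t}$ uniformly in $(p,q)$; you instead propose working directly from the Mehler representation on the $x$-side, interpolating among three endpoint bounds, and (for large $t$) splitting $G(t) = G(\tau)G(t-2\tau)G(\tau)$ to insert the known $L^2\to L^2$ decay from \cite{return2equilibrium}. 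The semigroup-splitting idea is elegant and elementary; in exchange it requires finiteness of $\norm{G(\tau)}_{L^p\to L^2}$ and $\norm{G(\tau)}_{L^2\to L^q}$ for \emph{all} $p,q$, i.e.\ also with $p > q$, which you rightly flag as exceeding the statement of part (2). The paper instead exploits that $\alpha$ is non-decreasing, so for $t\ge\epsilon$ the factor $\alpha(t)^{-n}\le\alpha(\epsilon)^{-n}$ is simply absorbed into the constant.

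The genuine gap is the prefactor estimate $\abs{\det A_t}^{-1/2}\le C\,t^{-(2k_0+1)n}$, which you describe as obtainable by Taylor-expanding $e^{-2tF}$ and tracking the ellipticization of $\mathrm{Re}\,F$ under iterated products with $\mathrm{Im}\,F$. This is precisely the hard quantitative content of the theorem, and your proposal sketches a plan rather than carrying it out. The paper does not reprove it either: it cites the estimate $R_t(z) \ge c\,t^{2k_0+1}\abs{z}^2$ from Section 2 of \cite{SubellipticEstimates} (transported to the FBI side as $\alpha(t)\ge c\,t^{2k_0+1}$), and the $t^{-(2k_0+1)n}$ exponent then drops out of the Gaussian integral $\int e^{-\alpha(t)\abs z^2}L(dz)\sim\alpha(t)^{-n}$. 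Your Mehler-side version of this estimate is plausible, but the relationship between $\det A_t$ and the number $k_0$ of iterated brackets is not automatic, and until you either prove it or cite it (as the paper cites \cite{SubellipticEstimates}) the proof is incomplete.

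A secondary concern: you claim $\norm{G(t)}_{L^1\to L^1}$ and $\norm{G(t)}_{L^\infty\to L^\infty}$ are $O(1)$ on $(0,t_0]$, from which Riesz--Thorin would give $\norm{G(t)}_{L^p\to L^q}\lesssim t^{-(2k_0+1)n(1/p-1/q)}$, \emph{strictly better} than the theorem's uniform $t^{-(2k_0+1)n}$ whenever $p>1$ or $q<\infty$. That may well be true, but it is not established in the paper (which only interpolates with $L^2\to L^2$), and the marginal integrability of $\abs{K_t(x,\cdot)}$ uniformly in $x$ as $t\to 0^+$ is a nontrivial consequence of how the degenerating Gaussian decay in $y$ competes with the blowing-up prefactor. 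You would need to verify this, e.g.\ by showing the off-diagonal width of $\mathrm{Re}\,\Psi_t$ in the $y$ variable scales like $\abs{\det A_t}^{1/n}$; otherwise the $L^1\to L^1$ endpoint bound is an unjustified assertion doing real work in the argument.
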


Let us make some general comments regarding Theorem \ref{main_theorem} First, the bounds (\ref{sharp_exponential_decay}) show that for any $1 \le p \le q \le \infty$ the operator norm $\norm{G(t)}_{L^p \rightarrow L^q}$ decays exponentially as $t \rightarrow \infty$, with $\gamma$ being the precise rate of decay, independent of $(p,q)$. To prove that $\gamma$ is the exact rate of exponential decay, one may examine the action of the propagator $G(t)$ on the `ground state' eigenfunction of $q^w(x,D)$ corresponding to the eigenvalue $\rho$ (see Section 4 below). Regarding the short time $0<t \ll 1$ bounds in Theorem \ref{main_theorem}, it is clear that (\ref{short_time_bound}) is not sharp for all $1 \le p \le q \le \infty$. For instance, (\ref{short_time_bound}) fails to reproduce (\ref{L^2 exponential decay bound}) when $p=q=2$. However, one may interpolate (\ref{short_time_bound}) with the bound $G(t) = \mathcal{O}_{L^2 \rightarrow L^2}(1)$ as $t \rightarrow 0^+$ to obtain more precise estimates at short times. We also note that when $(p,q) = (2,\infty)$, the bound (\ref{short_time_bound}) gives $G(t) = \mathcal{O}_{L^2 \rightarrow L^\infty}(t^{-(2k_0+1)n})$ as $t \rightarrow 0^+$, which is an improvement over (\ref{L^2 L^infty bound}).

Finally, let us briefly touch on the main ideas involved in the proof of Theorem \ref{main_theorem}. In the recent work \cite{Global_Analytic}, we showed that if $\mathcal{T}_{\varphi}$ is a global metaplectic FBI transform on $\R^n$, in the sense of either Chapter 13 of \cite{SemiclassicalAnalysis} or the minicourse \cite{Minicourse}, then the conjugated propagator $\widetilde{G}(t):=\mathcal{T}_{\varphi} \circ G(t) \circ \mathcal{T}^*_{\varphi}$ is, for each $t \ge 0$, a metaplectic Fourier integral operator acting on the Bargmann space $H_{\Phi_0}(\C^n)$, which is the unitary image of $L^2(\R^n)$ under $\mathcal{T}_\varphi$. In particular, we showed that the `Bergman form' (\cite{ComplexFIOs}, \cite{semigroup2resolvent}) of $\widetilde{G}(t)$ is given by
\begin{align} \label{the_Bergman_form}
	\widetilde{G}(t)u(z) = \hat{a}(t) \int_{\C^n} e^{2 \Psi_t(z,\overline{w})} u(w) e^{-2 \Phi_0(w)} \, L(dw), \ \ z \in \C^n, \ \ u \in H_{\Phi_0}(\C^n), \ \ t \ge 0,
\end{align}
where $L(dw)$ is the Lebesgue measure on $\C^n$, $\Phi_0(w) := \sup_{y\in \R^n}(-\textrm{Im} \ \varphi(w,y)), \ w \in \C^n$, is the strictly plurisubharmonic quadratic form on $\C^n$ associated to $\varphi$, $\Psi_t$ is a holomorphic quadratic form on $\C^{2n} = \C^n \times \C^n$ depending analytically on $t \ge 0$, and $\hat{a} \in C^\omega([0,\infty); \C)$ is a non-vanishing amplitude. Moreover, we showed that $\Psi_t$ and $\hat{a}$ are the solutions of an eikonal equation and a transport equation, respectively. In particular, we did not attempt to solve these equations explicitly for $\Psi_t$ and $\hat{a}$. Now, thanks to the work \cite{quadratic_evol}, it is known that when the singular space is trivial $S=\{0\}$ it is possible to choose a metaplectic FBI transform $\mathcal{T}_\varphi$ so that conjugated semigroup has the simple form
\begin{align} \label{good_form_of_the_conjugated_propagator}
	\widetilde{G}(t)u(z) = e^{\frac{i}{2} \trace{M} t} u(e^{itM} z), \ \ u \in H_{\Phi_0}(\C^n), \ t \ge 0,
\end{align}
where $M \in \textrm{Mat}_{n \times n}(\C)$ is a suitable matrix. In the present work, we show that this choice of $\mathcal{T}_\varphi$ leads to equations for $\Psi_t$ and $\hat{a}$ that may be easily solved. One may then show that (\ref{the_Bergman_form}) coincides with (\ref{good_form_of_the_conjugated_propagator}), giving an alternative derivation of (\ref{good_form_of_the_conjugated_propagator}). Once the Bergman form of $\widetilde{G}(t)$ is known and a basic estimate for the real part of its phase function is established, the bounds (\ref{sharp_exponential_decay}) and (\ref{short_time_bound}) follow easily by writing down a formal expression for the Schwartz kernel of the composition $\mathcal{T}^*_{\varphi} \circ \widetilde{G}(t) \circ \mathcal{T}_{\varphi}$ using (\ref{the_Bergman_form}) and applying Young's integral inequality.

The plan for this note is as follows. In Section 2, we recall how to choose the FBI transform $\mathcal{T}_\varphi$ so that (\ref{good_form_of_the_conjugated_propagator}) holds. In Section 3, we determine the Bergman form (\ref{the_Bergman_form}) of $\widetilde{G}(t)$ for $t \ge 0$ and prove some basic estimates. In Section 4, we conclude the proof of Theorem \ref{main_theorem}, as outlined in this introduction.

\vspace{5mm}

{\bf{Acknowledgements.}} The author would like to express gratitude to Michael Hitrik for reading a preliminary draft of this manuscript and providing helpful feedback and suggestions. The author would also like to thank Daniel Parker for a stimulating conversation.

\section{Reduction to a Normal Form on the FBI Transform Side}

In this section, we follow the approach of \cite{resolvent_esimates_elliptic} and \cite{SpectralProjectionsAndResolventBounds} for reducing $q^w(x,D)$ to a normal form via a metaplectic FBI transform. We provide additional references where convenient.

Let $q$ be a complex-valued quadratic form on $\R^{2n}$ with non-negative real part $\textrm{Re} \ q \ge 0$ and trivial singular space $S = \{0\}$. Let $\C^{2n} = \C^n_z \times \C^n_\zeta$ be equipped with the standard complex symplectic form $\sigma = d\zeta \wedge dz$. Let $F$ be the Hamilton matrix of $q$ introduced in (\ref{def_Hamilton_matrix}). From the work \cite{QuadraticOperators}, it is known that the matrix $F$ has no real eigenvalues. Consequently,
\begin{align}
	\# \set{\lambda \in \textrm{Spec}(F)}{\textrm{Im} \, \lambda>0} = \# \set{\lambda \in \textrm{Spec}(F)}{\textrm{Im} \, \lambda<0},
\end{align}
counting algebraic multiplicities. For $\lambda \in \textrm{Spec}(F)$, let
\begin{align}
	V_\lambda = \ker{\left((F-\lambda)^{2n}\right)} \subset \C^{2n}
\end{align}
be the generalized eigenspace of $F$ corresponding to $\lambda$. Let us also introduce the stable outgoing and stable incoming manifolds for the quadratic form $-i q$ given by
\begin{align}
	\Lambda^+ = \bigoplus_{\substack{\lambda \in \textrm{Spec}(F) \\ \textrm{Im} \ \lambda>0}} V_\lambda, \ \ \Lambda^- = \bigoplus_{\substack{\lambda \in \textrm{Spec}(F) \\ \textrm{Im} \ \lambda<0}} V_\lambda,
\end{align}
respectively. By Proposition 2.1 of \cite{SpectralProjectionsAndResolventBounds}, $\Lambda^+$ is a strictly positive $\C$-Lagrangian subspace of $\C^{2n}$ in the sense that $\Lambda^+$ is Lagrangian with respect to the complex symplectic form $\sigma$ and
\begin{align} \label{positive_Lagrangian}
	\frac{1}{i} \sigma(Z, \overline{Z}) > 0, \ \ \ Z \in \Lambda^+ \backslash \{0\},
\end{align}
and $\Lambda^{-}$ is a strictly negative $\C$-Lagrangian subspace of $\C^{2n}$ in the sense that $\Lambda^{-}$ is Lagrangian for the form $\sigma$ and (\ref{positive_Lagrangian}) holds for all $Z \in \Lambda^{-} \backslash \{0\}$ with `$>$' replaced by `$<$'. For background information regarding positive and negative $\C$-Lagrangian subspaces of $\C^{2n}$, we refer to either \cite{Minicourse} or \cite{ComplexFIOs}. In particular, from the discussion on pages 488-489 of \cite{Minicourse}, we know that there exists a holomorphic quadratic form $\varphi = \varphi(z,y)$ on $\C^{2n} = \C^n_z \times \C^n_y$ with
\begin{align} \label{FBI phase function}
	\det{\varphi''_{zy}} \neq 0, \ \ \textrm{Im} \ \varphi''_{yy}>0,
\end{align}
such that the complex linear canonical transformation
\begin{align} \label{FBI canonical transformation}
	\kappa_{\varphi}: \C^{2n} \ni (y, -\varphi'_y(z,y)) \mapsto (z, \varphi'_z(z,y)) \in \C^{2n}, \ \ (z,y) \in \C^{2n},
\end{align}
generated by $\varphi$ satisfies
\begin{align} \label{image_of_lambdas}
	\kappa_\varphi(\Lambda^+) = \set{(z,0)}{z\in \C^n}, \ \ \ \kappa_\varphi(\Lambda^{-}) = \set{(0,\zeta)}{\zeta \in \C^n}.
\end{align}
Let
\begin{align} \label{definition_spsh_weight}
	\Phi_0(z) = \sup_{y\in \R^n} \left(-\textrm{Im} \ \varphi(z,y) \right), \ \ z \in \C^n,
\end{align}
be the strictly plurisubharmonic quadratic form on $\C^n$ associated to the phase $\varphi$ (see Chapter 13 of  \cite{SemiclassicalAnalysis} or Section 1.3 of \cite{Minicourse}), and let
\begin{align} \label{def_I_Lagrangian}
\Lambda_{\Phi_0} = \set{\left(z, \frac{2}{i} \Phi'_{0,z}(z)\right)}{z \in \C^n}.
\end{align}
From either Theorem 13.5 of \cite{SemiclassicalAnalysis} or Proposition 1.3.2 of \cite{Minicourse}, we have
\begin{align} \label{image real phase space}
	\kappa_\varphi \left(\R^{2n}\right) = \Lambda_{\Phi_0},
\end{align}
and thus $\Lambda_{\Phi_0}$ is $I$-Lagrangian and $R$-symplectic for the complex symplectic form $\sigma$. Also, the strict positivity of $\Lambda^+$ in conjunction with (\ref{image_of_lambdas}) gives that the base $\set{(z,0)}{z \in \C^n}$ is strictly positive relative to $\Lambda_{\Phi_0}$ (see e.g. \cite{ComplexFIOs}). It then follows, as explained in Chapter 11 of \cite{AnalyticMicrolocal_Analysis}, that the quadratic form $\Phi_0$ is strictly convex.

Let
\begin{align} \label{def q tilde}
	\tilde{q} = q \circ \kappa_\varphi^{-1},
\end{align}
regarded as a holomorphic quadratic form on $\C^{2n}$. Since $\Lambda^+$ and $\Lambda^-$ are invariant under $F$ and Lagrangian with respect to $\sigma$, we have
\begin{align}
	q(X) = \sigma(X,FX) = 0, \ \ X \in \Lambda^+ \cup \Lambda^-.
\end{align}
From (\ref{image_of_lambdas}) and (\ref{def q tilde}), it follows that $\tilde{q}$ must be of the form
\begin{align} \label{good_form}
	\tilde{q}(z,\zeta) = M z \cdot \zeta, \ \ (z,\zeta) \in \C^{2n},
\end{align}
for some $M \in \textrm{Mat}_{n \times n}(\C^n)$. In particular, the complex Hamilton vector field of $\tilde{q}$ with respect to $\sigma$ is
\begin{align} \label{Hamilton_field_good_form}
	H_{\tilde{q}} = \left(Mz, -M^T\zeta\right), \ \ (z,\zeta) \in \C^{2n}.
\end{align}
The Hamilton map of $\tilde{q}$ is thus given by $\tilde{F} = \frac{1}{2} H_{\tilde{q}}$, and we have
\begin{align}
	\tilde{F} = \frac{1}{2} \begin{pmatrix} M & 0 \\ 0 & -M^T \end{pmatrix}.
\end{align}
As a consequence of (\ref{def q tilde}), (\ref{def_Hamilton_matrix}), and the invariance of $\sigma$ under $\kappa_\varphi$, it is true that $\tilde{F} = \kappa_\varphi \circ F \circ \kappa_{\varphi}^{-1}$. Since also $\tilde{F}$ maps $(z,0) \in \kappa_\varphi(\Lambda^+)$ to $\frac{1}{2}(Mz,0) \in \kappa_\varphi(\Lambda^+)$, we have
\begin{align} \label{isospectrality_Hamilton_map}
	\textrm{Spec}(M) = \textrm{Spec}(2F) \cap \{\textrm{Im} \ \lambda>0\},
\end{align}
with agreement of algebraic multiplicities.

Let $\mathcal{T}_\varphi: \mathcal{S}'(\R^n) \rightarrow \textrm{Hol}(\C^n)$ be the metaplectic FBI transform on $\R^n$ associated to $\varphi$, given in the sense of distributions by
\begin{align} \label{FBI_transform_associated_to_phi}
	\mathcal{T}_\varphi u(z) = c_\varphi \int_{\R^n} e^{i \varphi(z,y)} u(y) \, L(dy), \ \ u \in \mathcal{S}'(\R^n),
\end{align}
where
\begin{align}
	c_\varphi = 2^{-n/2} \pi^{-3n/4} (\det{\textrm{Im} \ \varphi''_{yy}})^{-1/4} \abs{\det{\varphi''_{zy}}}.
\end{align}
By Theorem 13.7 of \cite{SemiclassicalAnalysis}, $\mathcal{T}_{\varphi}$ is unitary $L^2(\R^n) \rightarrow H_{\Phi_0}(\C^n)$, where
\begin{align} \label{the_Bargmann_space}
	H_{\Phi_0}(\C^n) := L^2(\C^n, e^{-2 \Phi_0(z)} \, L(dz)) \cap \textrm{Hol}(\C^n)
\end{align}
is the Bargmann space associated to the weight $\Phi_0$, equipped with the natural Hilbert space structure inherited from $L^2(\C^n, e^{-2 \Phi_0(z)} \, L(dz))$. Here $L(dz)$ denotes the Lebesgue measure on $\C^n$. Let $\tilde{q}^w(z,D)$ denote the complex Weyl quantization of the symbol $\tilde{q}$ with respect to the weight $\Phi_0$. We recall that $\tilde{q}^w(z,D)$ is defined as an unbounded operator on $H_{\Phi_0}(\C^n)$ that acts on suitable $u \in H_{\Phi_0}(\C^n)$ by
\begin{align} \label{complex_Weyl_quantization}
	\tilde{q}^w(z,D)u(z) = \frac{1}{(2\pi)^n} \iint_{\Gamma_{\Phi_0}(z)} e^{i(z-w) \cdot \zeta} \tilde{q}^w\left(\frac{z+w}{2}, \zeta \right) u(w) \, dw \wedge d\zeta, \ \ z \in \C^n,
\end{align}
for the contour of integration
\begin{align}
	\Gamma_{\Phi_0}(z): w \mapsto \zeta = \frac{2}{i} \Phi'_{0,z} \left(\frac{z+w}{2} \right), \ \ w \in \C^n, \ \ z \in \C^n.
\end{align}
For more information on Weyl quantization in the complex domain, see Chapter 13 of \cite{SemiclassicalAnalysis} or Section 1.4 of \cite{Minicourse}. By Egorov's theorem (see Theorem 13.9 in \cite{SemiclassicalAnalysis} or Theorem 1.4.2 of \cite{Minicourse}), we have
\begin{align} \label{Egorov_to_cite}
	q^w(x,D) = \mathcal{T}_{\varphi}^* \circ \tilde{q}^w(z,D) \circ \mathcal{T}_{\varphi}
\end{align}
when both sides are viewed as operators acting on the maximal domain of $q^w(x,D)$,
\begin{align} \label{the_maximal_domain}
	D_{\textrm{max}} = \set{u \in L^2(\R^n)}{q^w(x,D)u \in L^2(\R^n)}.
\end{align}
Let
\begin{align} \label{Bargmann_space_maximal_domain}
	\widetilde{D}_{\textrm{max}} = \set{u \in H_{\Phi_0}(\C^n)}{\tilde{q}^w(z,D)u \in H_{\Phi_0}(\C^n)}
\end{align}
be the maximal domain of $\tilde{q}^w(z,D)$, and let us view $\tilde{q}^w(z,D)$ as an unbounded operator on $H_{\Phi_0}(\C^n)$ with the domain $\widetilde{D}_{\textrm{max}}$. Thanks to (\ref{Egorov_to_cite}), we have
\begin{align} \label{equivalence_of_domains}
	\widetilde{D}_{\textrm{max}} = \mathcal{T}_{\varphi}(D_{\textrm{max}}).
\end{align}

Let $G(t) = e^{-tq^w(x,D)}$, $t \ge 0$, be the strongly continuous semigroup on $L^2(\R^n)$ generated by $q^w(x,D)$ (see \cite{GeneralizedMehler}). From (\ref{Egorov_to_cite}), (\ref{equivalence_of_domains}), and the unitarity of $\mathcal{T}_{\varphi}$, it follows that $\tilde{q}^w(z,D)$ generates a strongly continuous semigroup $\widetilde{G}(t) = e^{-t\tilde{q}^w(z,D)}$, $t \ge 0$, on $H_{\Phi_0}(\C^n)$. The semigroups $G(t)$ and $\widetilde{G}(t)$ are related by
\begin{align} \label{conjugated_the_semigroup}
	G(t) = \mathcal{T}^*_{\varphi} \circ \widetilde{G}(t) \circ \mathcal{T}_{\varphi}
\end{align}
for all $t \ge 0$. 

We have established the following proposition, which summarizes the discussion in this section.

\begin{proposition} \label{proposition_2.1}
	Let $q$ be a complex-valued quadratic form on $\R^{2n}$ with non-negative real part $\textrm{Re} \, q \ge 0$ and trivial singular space $S = \{0\}$. Let $F$ be the Hamilton matrix of $q$, and let $q^w(x,D)$ be the Weyl quantization of $q$, viewed as an unbounded operator on $L^2(\R^n)$ equipped with its maximal domain $D_{\textrm{max}}$ defined in (\ref{the_maximal_domain}).
	Let $G(t) = e^{-tq^w(x,D)}$, $t \ge 0$, be the strongly continuous semigroup on $L^2(\R^n)$ generated by $q^w(x,D)$.
	\begin{enumerate}
	\item 	There exists a holomorphic quadratic form $\varphi$ on $\C^{2n}$ satisfying (\ref{FBI phase function}) such that the quadratic form $\Phi_0$ defined by (\ref{definition_spsh_weight}) is strictly convex and the complex linear canonical transformation $\kappa_{\varphi}: \C^{2n} \rightarrow \C^{2n}$ defined implicitly by (\ref{FBI canonical transformation}) has the property that
	\begin{align}
		\tilde{q}(z,\zeta) := \left(q \circ \kappa_{\varphi}^{-1}\right)(z,\zeta) = M z \cdot \zeta, \ \ (z,\zeta) \in \C^{2n},
	\end{align}
	where $M \in \textrm{Mat}_{n \times n}(\C)$ is such that $\textrm{Spec}(M) = \textrm{Spec}(2F) \cap \{\textrm{Im} \ \lambda>0\}$ with agreement of algebraic multiplicities.
	\item 	Let $\tilde{q}^w(z,D)$ be the complex Weyl quantization (\ref{complex_Weyl_quantization}) of $\tilde{q}$ with respect to the weight $\Phi_0$, realized as an unbounded operator on the Bargmann space $H_{\Phi_0}(\C^n)$ introduced in (\ref{the_Bargmann_space}) equipped with the maximal domain $\widetilde{D}_{\textrm{max}}$ defined in (\ref{Bargmann_space_maximal_domain}). The operator $\tilde{q}^w(z,D)$ generates a strongly continuous semigroup $\widetilde{G}(t) = e^{-t\tilde{q}^w(z,D)}$, $t \ge 0$, on $H_{\Phi_0}(\C^n)$ that is unitarily equivalent to $G(t)$ for each $t \ge 0$. This unitary equivalence is given by the FBI transform $\mathcal{T}_\varphi$ introduced in (\ref{FBI_transform_associated_to_phi}), i.e.
	\begin{align}
		G(t) = \mathcal{T}_{\varphi}^* \circ \widetilde{G}(t) \circ \mathcal{T}_\varphi, \ \ t \ge 0.
	\end{align}
	\end{enumerate}
\end{proposition}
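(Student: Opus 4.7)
The plan is to assemble the ingredients already collected in the discussion preceding the proposition and to verify that they yield precisely the two claimed statements.

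For part (1), I would begin by invoking the result from \cite{QuadraticOperators} that, under the hypothesis $S=\{0\}$, the Hamilton matrix $F$ has no real eigenvalues, so the eigenvalues of $F$ split evenly (with algebraic multiplicity) between the upper and lower half planes. This makes $\Lambda^\pm$ well-defined as direct sums of generalized eigenspaces, each of dimension $n$. I would then quote Proposition 2.1 of \cite{SpectralProjectionsAndResolventBounds} to get that $\Lambda^+$ is strictly positive $\C$-Lagrangian and $\Lambda^-$ is strictly negative $\C$-Lagrangian. Given a strictly positive Lagrangian pair transverse to each other, standard symplectic linear algebra (as explained on pp.~488--489 of \cite{Minicourse}) produces a holomorphic generating quadratic form $\varphi(z,y)$ satisfying the nondegeneracy condition (\ref{FBI phase function}) whose associated complex linear canonical transformation $\kappa_\varphi$ simultaneously sends $\Lambda^+$ to the zero section and $\Lambda^-$ to the cotangent fiber above $0$, as in (\ref{image_of_lambdas}). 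The strict convexity of $\Phi_0$ then follows from (\ref{image real phase space}) together with the strict positivity of $\Lambda^+$, by the discussion in Chapter 11 of \cite{AnalyticMicrolocal_Analysis}.

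The core computation is to read off the normal form of $\tilde q = q\circ\kappa_\varphi^{-1}$. Since $\Lambda^\pm$ are Lagrangian and $F$-invariant, $q$ vanishes on each of them, so $\tilde q$ vanishes on $\{(z,0)\}$ and on $\{(0,\zeta)\}$. As $\tilde q$ is a holomorphic quadratic form on $\C^n_z\times\C^n_\zeta$, this forces $\tilde q(z,\zeta) = Mz\cdot\zeta$ for some $M \in \textrm{Mat}_{n\times n}(\C)$. To identify the spectrum of $M$, I would compute the Hamilton map $\tilde F = \tfrac{1}{2}H_{\tilde q}$ in block-diagonal form $\tilde F = \tfrac{1}{2}\operatorname{diag}(M,-M^T)$ from (\ref{Hamilton_field_good_form}), and then use the conjugacy $\tilde F = \kappa_\varphi \circ F \circ \kappa_\varphi^{-1}$ (which is a direct consequence of (\ref{def q tilde}), (\ref{def_Hamilton_matrix}), and the symplectic invariance of $\kappa_\varphi$). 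Since $\kappa_\varphi$ restricts to an isomorphism $\Lambda^+ \to \{(z,0)\}$ on which $\tilde F$ acts by $\tfrac{1}{2}M$, the spectrum of $\tfrac{1}{2}M$ coincides with the spectrum of $F$ restricted to $\Lambda^+$, which by definition is $\textrm{Spec}(F)\cap\{\operatorname{Im}\lambda>0\}$ with matching algebraic multiplicities. This yields the claimed identity $\textrm{Spec}(M) = \textrm{Spec}(2F)\cap\{\operatorname{Im}\lambda>0\}$.

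For part (2), I would invoke Theorem 13.7 of \cite{SemiclassicalAnalysis} to conclude that $\mathcal{T}_\varphi$ is unitary from $L^2(\R^n)$ onto $H_{\Phi_0}(\C^n)$. Egorov's theorem for quadratic symbols (Theorem 13.9 of \cite{SemiclassicalAnalysis} / Theorem 1.4.2 of \cite{Minicourse}) then gives the intertwining $q^w(x,D) = \mathcal{T}_\varphi^* \circ \tilde q^w(z,D) \circ \mathcal{T}_\varphi$ on $D_{\textrm{max}}$, from which unitarity forces $\widetilde{D}_{\textrm{max}} = \mathcal{T}_\varphi(D_{\textrm{max}})$ and transports the maximal accretivity of $q^w(x,D)$ (from \cite{GeneralizedMehler}) to $\tilde q^w(z,D)$. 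By the Hille--Yosida theorem, $-\tilde q^w(z,D)$ therefore generates a strongly continuous contraction semigroup $\widetilde{G}(t)$ on $H_{\Phi_0}(\C^n)$, and the identity $G(t) = \mathcal{T}_\varphi^*\circ\widetilde{G}(t)\circ\mathcal{T}_\varphi$ follows either from uniqueness of the generated semigroup or directly from exponentiating the intertwining relation.

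The main obstacle is the linear-algebraic argument producing the generating function $\varphi$ with the simultaneous normalization (\ref{image_of_lambdas}); once this is in hand the remainder is bookkeeping from standard results in FBI/Bargmann calculus and semigroup theory.
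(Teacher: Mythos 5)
Your proposal follows essentially the same path as the paper's own Section 2 discussion: existence of the positive/negative Lagrangian pair $\Lambda^\pm$ from the trivial singular space hypothesis, construction of $\varphi$ via the Minicourse normalization sending $\Lambda^+$ and $\Lambda^-$ to the zero section and the fiber, identification of $\tilde q = Mz\cdot\zeta$ from vanishing of $q$ on the $F$-invariant Lagrangians, the spectral computation via the conjugacy $\tilde F = \kappa_\varphi F \kappa_\varphi^{-1}$, and then Egorov's theorem plus unitarity of $\mathcal{T}_\varphi$ to transport the maximal domain and the generated semigroup. The only addition beyond the paper's text is making the Hille--Yosida step explicit, which the paper leaves implicit; otherwise the two arguments coincide.
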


\section{The Evolution Semigroup on the FBI Transform Side}

We now study the semigroup $\widetilde{G}(t)$, $t \ge 0$. Let $\Psi_0$ be the polarization of $\Phi_0$, i.e. $\Psi_{0}$ is the unique holomorphic quadratic form on $\C^{2n} = \C^n \times \C^n$ such that $\Psi_{0}(z, \overline{z}) = \Phi_0(z)$ for all $z \in \C^n$. Since
\begin{align} \label{full_form_weight}
	\Phi_0(z) = \frac{1}{2} \Phi''_{0, zz} z \cdot z + \Phi''_{0, \overline{z} z} z \cdot \overline{z} + \frac{1}{2} \Phi''_{0, \overline{z} \, \overline{z}}  \overline{z} \cdot \overline{z}, \ \ z \in \C^n,
\end{align}
we see that $\Psi_0$ is given explicitly by
\begin{align} \label{explicit_polarization}
	\Psi_0(z,\theta) = \frac{1}{2} \Phi''_{0, zz} z \cdot z + \Phi''_{0, \overline{z} z} z \cdot \theta + \frac{1}{2} \Phi''_{0, \overline{z} \, \overline{z}} \theta \cdot \theta, \ \ (z,\theta) \in \C^{2n}.
\end{align}
In the work \cite{Global_Analytic}, we showed that for every $t \ge 0$ the semigroup $\widetilde{G}(t)$ is a metaplectic Fourier integral operator in the complex domain whose underlying complex canonical transformation is the Hamilton flow $\tilde{\kappa}_t$ of the symbol $\tilde{q}$ at time $t/i$, i.e.
\begin{align} \label{def_Hamilton_flow}
	\tilde{\kappa}_t = \exp{\left(\frac{t}{i} H_{\tilde{q}} \right)}, \ \ t \ge 0.
\end{align}
In view of (\ref{Hamilton_field_good_form}), we have
\begin{align} \label{explicit_Hamilton_flow}
	\tilde{\kappa}_t(z,\zeta) = \left(e^{-itM} z, e^{itM^T} \zeta \right), \ \ (z,\zeta) \in \C^{2n}, \ t \ge 0.
\end{align}
For background information regarding metaplectic Fourier integral operators in the complex domain, see Appendix B of \cite{PTSymmetric}. In particular, in the work \cite{ComplexFIOs}, it was shown that every such metaplectic Fourier integral operator in $\C^n$ possesses a unique `Bergman form.' In Section 6 of \cite{Global_Analytic}, we proved that the Bergman form of $\tilde{G}(t)$ is given by
\begin{align} \label{the_good_Bergman_form}
	\widetilde{G}(t)u(z) = \hat{a}(t) \int_{\C^n} e^{2 \Psi_t(z,\overline{w})} u(w) e^{-2 \Phi_0(w)} \, L(dw), \ \ z \in \C^n, \ \ u \in H_{\Phi_0}(\C^n),
\end{align}
where $\Psi_t$ is a holomorphic quadratic form on $\C^{2n}$, depending analytically on $t \ge 0$, and $\hat{a} \in C^\omega([0,\infty); \C)$ is a non-vanishing amplitude. In addition, we showed that $\Psi_t$, $t \ge 0$, is the unique solution of the eikonal equation
\begin{align} \label{eikonal_equation_phase}
	\begin{cases}
		2 \p_t \Psi_t(z,\theta) + \tilde{q} \left(z, \frac{2}{i} \Psi'_{t,z}(z,\theta) \right)=0, \ \ (z,\theta) \in \C^{2n}, \ \ t \ge 0, \\
		\left. \Psi_t(z,\theta) \right|_{t=0} = \Psi_{0}(z,\theta), \ \ (z,\theta) \in \C^{2n},
	\end{cases}
\end{align}
and $\hat{a}$ is the unique solution of the transport equation
\begin{align} \label{transport_equation}
	\begin{cases}
		\hat{a}'(t) + \frac{1}{2i} \beta(t) \hat{a}(t) = 0, \ \ t \ge 0, \\
		\hat{a}(0) = C_{\Phi_0},
	\end{cases}
\end{align}
where
\begin{align}
	\beta(t) = \trace{\tilde{q}''_{\zeta z} + \tilde{q}''_{\zeta \zeta} \cdot \frac{2}{i} \Psi''_{t,zz}}, \ \ t \ge 0,
\end{align}
and
\begin{align}
	C_{\Phi_0} = 2^n \pi^{-n} \det{\Phi_{0, z\overline{z}}''}.
\end{align}
We note that the initial conditions in (\ref{eikonal_equation_phase}) and (\ref{transport_equation}) are chosen so that when $t=0$ the righthand side of (\ref{the_good_Bergman_form}) coincides with the orthogonal projector $\Pi_{\Phi_0}: L^2(\C^n, e^{-2 \Phi_0(z)} \, L(dz)) \rightarrow H_{\Phi_0}(\C^n)$, which has the explicit integral represenation
\begin{align} \label{Bergman_projector}
	\Pi_{\Phi_0} u(z) = C_{\Phi_0} \int_{\C^n} e^{2 \Psi_0(z, \overline{w})} u(w) e^{-2 \Phi_0(w)} \, L(dw), \ \ u \in L^2 (\C^n, e^{-2 \Phi(z)} \, L(dz)).
\end{align}
In the literature, the operator $\Pi_{\Phi_0}$ is known as the `Bergman projector' associated to the weight $\Phi_0$. For a proof of (\ref{Bergman_projector}), see Theorem 13.6 of \cite{SemiclassicalAnalysis} or Proposition 1.3.4 of \cite{Minicourse}.

Since $\tilde{q}$ has the simple form (\ref{good_form}),  we may determine $\Psi_t$ and $\hat{a}$ by solving (\ref{eikonal_equation_phase}) and (\ref{transport_equation}) explicitly. We begin by studying the transport equation (\ref{transport_equation}). Thanks to (\ref{good_form}), we see that
\begin{align}
	\beta(t) = \trace{M}, \ \ t \ge 0.
\end{align}
The unique solution of (\ref{transport_equation}) is
\begin{align} \label{solution_transport_equation}
	\hat{a}(t) = C_{\Phi_0} e^{\frac{i}{2} \trace{M} t}, \ \ t \ge 0.
\end{align}
Next, we solve (\ref{eikonal_equation_phase}) for $\Psi_t$. We search for a solution to (\ref{eikonal_equation_phase}) of the form
\begin{align} \label{ansatz_for_eikonal_equation}
	\Psi_t(z,\theta) = \frac{1}{2} A_t z \cdot z + B_t z \cdot \theta + \frac{1}{2} D_t \theta \cdot \theta, \ \ (z,\theta) \in \C^{2n}, \ \ t \ge 0,
\end{align}
where $A_t, B_t, D_t \in \textrm{Mat}_{n \times n}(\C)$ depend smoothly on $t$ and $A_t = A_t^T$ and $D_t = D_t^T$ for all $t \ge 0$.  Inserting (\ref{ansatz_for_eikonal_equation}) into (\ref{eikonal_equation_phase}) and using (\ref{good_form}) and (\ref{explicit_polarization}), we see that $\Psi_t$ will be a solution of (\ref{eikonal_equation_phase}) provided $A_t, B_t$, and $D_t$ satisfy
\begin{align} \label{first_initial_value_problem}
	\begin{cases}
		\p_t A_t z \cdot z + \frac{2}{i} A_t Mz \cdot z = 0, \ \ z \in \C^n, \ t \ge 0, \\
		A_0 = \Phi''_{0, zz},
	\end{cases}
\end{align}
\begin{align} \label{second_initial_value_problem}
	\begin{cases}
	\p_t B_t z \cdot \theta + \frac{1}{i} B_t M z \cdot \theta = 0, \ \ z, \theta \in \C^n, \ t \ge 0, \\
	B_0 = \Phi''_{0, \overline{z} z},
	\end{cases}
\end{align}
and
\begin{align} \label{third_initial_value_problem}
	\begin{cases}
		\p_t C_t \theta \cdot \theta = 0, \ \ \theta \in \C^n, \ t \ge 0, \\
		C_0 = \Phi''_{0, \overline{z} \overline{z}},
	\end{cases}
\end{align}
respectively. The symmetry of $A_t$ implies that
\begin{align}
	2 A_t M z \cdot z = \left(A_t M + M^T A_t \right) z \cdot z, \ \ z \in \C^n, \ t \ge 0.
\end{align}
Thus (\ref{first_initial_value_problem}) holds if and only if
\begin{align} \label{better_initial_value_problem}
	\begin{cases}
		\p_t A_t + \frac{1}{i} A_t M + \frac{1}{i} M^T A_t = 0, \ \ t \ge 0, \\
		A_0 = \Phi''_{0, zz}. 
	\end{cases}
\end{align}
The unique solution of (\ref{better_initial_value_problem}) is
\begin{align}
	A_t = e^{i M^T t} \Phi''_{0, zz} e^{i M t}, \ \ t \ge 0.
\end{align}
By inspection, the solutions of (\ref{second_initial_value_problem}) and (\ref{third_initial_value_problem}) are
\begin{align}
	B_t = \Phi''_{0, \overline{z} z} e^{i t M}, \ C_t = \Phi''_{0, \overline{z} \overline{z}}, \ \ t \ge 0,
\end{align}
respectively. Using (\ref{explicit_polarization}), we get
\begin{align} \label{explicit_form_psi_t}
	\Psi_t(z,\theta) = \Psi_0(e^{itM} z, \theta), \ \ (z,\theta) \in \C^{2n}, \ t \ge 0.
\end{align}
From (\ref{the_good_Bergman_form}), (\ref{Bergman_projector}), (\ref{solution_transport_equation}), and (\ref{explicit_form_psi_t}), we deduce that
\begin{align} \label{propagator_is_pullback}
	\widetilde{G}(t) u(z) = e^{\frac{i}{2} \trace{M} t} u \left(e^{itM} z \right), \ \ u \in H_{\Phi_0}(\C^n), \ t \ge 0.
\end{align}
The formula (\ref{propagator_is_pullback}) for the semigroup $\widetilde{G}(t)$ was obtained by a different method in \cite{quadratic_evol}.

For $t \ge 0$, let us define
\begin{align} \label{explicit_form_phi_t}
	\Phi_t(z) = \Phi_0 \left(e^{itM} z \right), \ \ z \in \C^n, \ t \ge 0.
\end{align}
Since $\Phi_0$ is strictly convex, $\Phi_t$ is a strictly convex quadratic form on $\C^n$ for all $t \ge 0$. In addition, we have $\Phi_{t}|_{t=0} = \Phi_0$. For $t \ge 0$, let
\begin{align} \label{t-dependent_Bargmann_space}
	H_{\Phi_t}(\C^n) = L^2 \left(\C^n, e^{-2 \Phi_t(z)} \, L(dz) \right) \cap \textrm{Hol}(\C^n)
\end{align}
be the Bargmann space associated to $\Phi_t$, equipped with the natural Hilbert space structure induced from $L^2 \left(\C^n, e^{-2 \Phi_t(z)} \, L(dz) \right)$. From (\ref{propagator_is_pullback}), it is clear that $\widetilde{G}(t)$ is bounded $H_{\Phi_0}(\C^n) \rightarrow H_{\Phi_t}(\C^n)$ for every $t \ge 0$, and a direct computation using (\ref{propagator_is_pullback}), (\ref{explicit_form_phi_t}), and (\ref{isospectrality_Hamilton_map}) gives
\begin{align}
	\norm{\widetilde{G}(t)u}_{H_{\Phi_t}(\C^n)} = e^{\gamma t} \norm{u}_{H_{\Phi_0}(\C^n)}, \ \ u \in H_{\Phi_0}(\C^n), \ t \ge 0,
\end{align}
where $\gamma>0$ is as in the statement of Theorem \ref{main_theorem}.

The following proposition summarizes the discussion so far in this section and establishes some basic estimates that will be necessary for the proof of Theorem \ref{main_theorem} in Section 4.

\begin{proposition} \label{Proposition_3.1}
Let $q$, $\tilde{q}$, $M$, $\Phi_0$, $H_{\Phi_0}(\C^n)$, and $\widetilde{G}(t)$ be as in Proposition \ref{proposition_2.1}.
	\begin{enumerate}
		\item For every $t \ge 0$, we have
	\begin{align}
		\widetilde{G}(t) u(z) = e^{\frac{i}{2} \trace{M} t} u \left(e^{itM} z \right), \ \ u \in H_{\Phi_0}(\C^n).
	\end{align}
	In addition,
	\begin{align}
		\norm{\widetilde{G}(t) u}_{H_{\Phi_t}(\C^n)} = e^{\gamma t}\norm{u}_{H_{\Phi_0}(\C^n)}, \ \ t \ge 0,
	\end{align}
	where
	\begin{align} \label{propagated_weights}
		\Phi_t(z) = \Phi_0 \left(e^{itM} z \right), \ \ z \in \C^n, \ t \ge 0,
	\end{align}
	the norm $\norm{\cdot}_{H_{\Phi_t}(\C^n)}$ is the norm on the Bargmann space $H_{\Phi_t}(\C^n)$ introduced in (\ref{t-dependent_Bargmann_space}), and $\gamma>0$ is as in the statement of Theorem 1.1.
	
	\item Let $R_t = \Phi_0 - \Phi_t$, $t \ge 0$, and let $\alpha: [0, \infty) \rightarrow \R$ be the continuous function defined by
	\begin{align} \label{def_alpha_t}
		\alpha(t) = \textrm{min}_{\abs{z}=1} R_t(z),
	\end{align}
	so that
	\begin{align} \label{bound_for_R_t}
	R_t(z) \ge \alpha(t) \abs{z}^2, \ \ z \in \C^n, \ t \ge 0.	
	\end{align}
	The function $\alpha$ has the following properties:
	\begin{enumerate}
	\item $\alpha(0) = 0$ and $\alpha(t)>0$ for all $t>0$,
	\item $\alpha$ is non-decreasing,
	\item there is $0<t_0 \ll 1$ and $c>0$ such that
	\begin{align} \label{short_time_lower_bound}
		\alpha(t) \ge c t^{2k_0+1}, \ \ 0 \le t \le t_0,
	\end{align}
	where $k_0 \in \{0,1,\ldots, 2n-1\}$ is the smallest non-negative integer such that (\ref{def_k_0}) holds, and
	\item $\alpha(t) \rightarrow \min_{\abs{z}=1} \Phi_0(z)>0$ as $t \rightarrow \infty$.
	\end{enumerate}

	\item Let $\Psi_0$ be the polarization of $\Phi_0$ given by (\ref{explicit_polarization}). For any $t \ge 0$ and $u \in H_{\Phi_0}(\C^n)$, we have
	\begin{align}
		\widetilde{G}(t)u(z) = C_{\Phi_0} e^{\frac{i}{2} \trace{M} t} \int_{\C^n} e^{2 \Psi_t(z,\overline{w})} u(w) e^{-2 \Phi_0(w)} \, L(dw), \ \ z \in \C^n,
	\end{align}
	where
	\begin{align}
		\Psi_t(z,\theta) = \Psi_0 \left(e^{itM} z, \theta \right), \ \ (z,\theta) \in \C^{2n}, \ t \ge 0.
	\end{align}
	Moreover, there are constants $C,c>0$, independent of $t$, such that
	\begin{align} \label{fundamental_estimate}
			-C \abs{w - e^{it M} z}^2 \le 2 \, \textrm{Re} \, \Psi_t(z,\overline{w}) - \Phi_t(z) - \Phi_0(w) \le -c \abs{w-e^{itM} z}^2, \ \ z,w \in \C^n, \ t \ge 0.
	\end{align}

	\end{enumerate}

\end{proposition}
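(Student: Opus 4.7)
The proposition divides naturally into the three parts of its statement.

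For Part 1, the pullback formula for $\widetilde{G}(t)$ is precisely \eqref{propagator_is_pullback}, already derived above from the Bergman representation \eqref{the_good_Bergman_form} together with the explicit solutions \eqref{solution_transport_equation} and \eqref{explicit_form_psi_t} of the transport and eikonal equations. For the norm identity, I would substitute $w = e^{itM}z$ in the integral defining $\|\widetilde{G}(t)u\|^2_{H_{\Phi_t}(\C^n)}$: the complex Jacobian contributes $|\det e^{-itM}|^2 = e^{2t\,\textrm{Im}(\textrm{tr}\,M)}$, the prefactor contributes $|e^{\frac{i}{2}\textrm{tr}(M)t}|^2 = e^{-t\,\textrm{Im}(\textrm{tr}\,M)}$, and $\Phi_t(e^{-itM}w) = \Phi_0(w)$ by \eqref{propagated_weights}, so that $\|\widetilde{G}(t)u\|^2_{H_{\Phi_t}} = e^{t\,\textrm{Im}(\textrm{tr}\,M)}\|u\|^2_{H_{\Phi_0}}$. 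The isospectrality \eqref{isospectrality_Hamilton_map} and the definition \eqref{def_rho} of $\rho$ give $\textrm{Im}(\textrm{tr}\,M) = 2\gamma$, which yields the stated norm identity.

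For Part 2, I would differentiate $\Phi_t(z) = \Phi_0(e^{itM}z)$ in $t$ and, using the explicit form $\tilde{q}(z,\zeta) = Mz\cdot\zeta$, arrive at
\[
\partial_t R_t(z) \;=\; -\partial_t\Phi_t(z) \;=\; \textrm{Re}(\tilde{q})\Bigl(e^{itM}z,\tfrac{2}{i}\Phi'_{0,z}(e^{itM}z)\Bigr).
\]
Since $\kappa_\varphi(\R^{2n}) = \Lambda_{\Phi_0}$ and $\textrm{Re}\,q \ge 0$ on $\R^{2n}$, the integrand is pointwise nonnegative on $\Lambda_{\Phi_0}$, which immediately gives $R_t \ge 0$ and the monotonicity (b). For (d), the eigenvalues of $M$ all lie in the upper half-plane by \eqref{isospectrality_Hamilton_map}, so $e^{itM}z \to 0$ as $t \to \infty$, hence $\Phi_t \to 0$ uniformly on $|z|=1$ and $\alpha(t) \to \min_{|z|=1}\Phi_0(z)$, which is positive by strict convexity of $\Phi_0$. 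Items (a) and (c) both come from the hypothesis $S = \{0\}$: Taylor-expanding the integrand as $\sum_{k\ge 0} s^k g_k(z)/k!$, one identifies the quadratic forms $g_k$ with the iterates $(\textrm{Re}\,F)(\textrm{Im}\,F)^j$ transported by $\kappa_\varphi$, so that \eqref{def_k_0} becomes exactly the statement that $\sum_{k=0}^{2k_0} g_k(z) \gtrsim |z|^2$ uniformly for $|z|=1$; integrating in $s$ then yields $R_t(z) \ge c t^{2k_0+1}|z|^2$ for $0 \le t \le t_0$, implying both the short-time bound and $\alpha(t) > 0$ for $t > 0$.

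For Part 3, the Bergman form comes from the reproducing identity $\widetilde{G}(t)u = \widetilde{G}(t)\Pi_{\Phi_0}u$ together with the pullback formula \eqref{propagator_is_pullback} and the explicit kernel \eqref{Bergman_projector} of $\Pi_{\Phi_0}$, yielding $\Psi_t(z,\theta) = \Psi_0(e^{itM}z,\theta)$. The estimate \eqref{fundamental_estimate} then reduces, upon setting $z' = e^{itM}z$, to the classical inequality
\[
-C|z' - w|^2 \;\le\; 2\,\textrm{Re}\,\Psi_0(z',\overline{w}) - \Phi_0(z') - \Phi_0(w) \;\le\; -c|z' - w|^2,
\]
which is a standard property of the polarization of a strictly convex quadratic form: the middle expression is a real quadratic form in $(z',w) \in \C^{2n}$ vanishing on the diagonal $\{z'=w\}$, whose transverse Hessian is controlled by the Hermitian positive-definite form $\Phi''_{0,\overline{z}z}$. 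Since $|z' - w| = |w - e^{itM}z|$, the constants $C, c$ are inherited directly from the $t=0$ case, and in particular are independent of $t$.

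The main obstacle I anticipate is item (c) of Part 2: promoting the algebraic kernel condition \eqref{def_k_0} to the sharp quantitative lower bound $\alpha(t) \ge c t^{2k_0+1}$. The argument parallels the derivation of subelliptic estimates for quadratic operators with nontrivial singular space in \cite{QuadraticOperators} and \cite{SemiclassicalHypoelliptic}, and requires careful bookkeeping to match the Taylor coefficients $g_k$ of $s \mapsto \textrm{Re}(\tilde{q})|_{\Lambda_{\Phi_0}}(e^{isM}z)$ with the alternating composition pattern $(\textrm{Re}\,F)(\textrm{Im}\,F)^j$ that defines $k_0$.
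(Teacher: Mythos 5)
Your proposal is essentially correct and follows the same broad strategy as the paper, with a few local variations worth flagging. For the Bergman form in Part 3, you derive it from the pullback formula \eqref{propagator_is_pullback} by composing with the reproducing identity \eqref{Bergman_projector}, whereas the paper runs the implication in the opposite direction (solve the eikonal and transport equations to get $\Psi_t$ and $\hat{a}$, then recognize that the resulting integral collapses to the pullback). Once Part 1 is in hand the two derivations are logically interchangeable, and yours is perhaps slightly lighter for this particular claim. For Part 2(a), you deduce $\alpha(t)>0$ for $t>0$ from (c) together with monotonicity (b); the paper instead invokes Proposition 6.1 of \cite{Global_Analytic}, which gives $\Lambda_{\Phi_0}\cap\Lambda_{\Phi_t}=\pi_1(\kappa_\varphi(S))=\{0\}$ for $t>0$, hence positivity of $R_t$ off the origin without any quantitative input — a cleaner, purely qualitative route. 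For Part 2(c), the paper simply cites the main result of Section 2 of \cite{SubellipticEstimates}, which states precisely that $R_t(z)\ge ct^{2k_0+1}\abs{z}^2$ for $0\le t\le t_0$; you attempt to re-derive it and correctly flag this as the main obstacle, but your sketch is too loose as written: the Taylor coefficients $g_k$ of $s\mapsto\textrm{Re}(\tilde{q})|_{\Lambda_{\Phi_0}}(e^{isM}z)$ are mixed polynomials in $\textrm{Re}\,F$ and $\textrm{Im}\,F$, not the specific compositions $(\textrm{Re}\,F)(\textrm{Im}\,F)^j$ appearing in \eqref{def_k_0}, and the nontrivial averaging lemma going back to \cite{QuadraticOperators} is exactly what bridges the algebraic kernel condition to the positivity of the time-averaged form with exponent $2k_0$. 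The remaining ingredients — the norm identity computation in Part 1 (including the identification $\textrm{Im}(\textrm{tr}\,M)=2\gamma$ via \eqref{isospectrality_Hamilton_map} and \eqref{def_rho}), items (b) and (d) of Part 2, and the algebraic reduction of \eqref{fundamental_estimate} to the $t=0$ case controlled by the Levi form — match the paper's arguments.
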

\begin{proof}
It remains to establish Point 2 and the estimate (\ref{fundamental_estimate}). To this end, let
\begin{align}
	R_t(z) = \Phi_0(z) - \Phi_t(z), \ \ z \in \C^n, \ t \ge 0,
\end{align}
and let $\alpha: [0,\infty) \rightarrow \infty$ be as in (\ref{def_alpha_t}). We will begin by showing that
\begin{align} \label{domination_weights}
	R_t(z) \ge 0, \ \ z \in \C^n, \ t \ge 0.
\end{align}
Let $\tilde{\kappa}_t$, $t \ge 0$, be as in (\ref{def_Hamilton_flow}). A straightforward computation using (\ref{def_I_Lagrangian}), (\ref{explicit_Hamilton_flow}), and (\ref{propagated_weights}) gives that
\begin{align} \label{mapping_I_Lagrangians}
	\tilde{\kappa}_t \left(\Lambda_{\Phi_0} \right) = \Lambda_{\Phi_t} := \set{\left(z, \frac{2}{i} \Phi'_{t,z}(z) \right)}{z \in \C^n}, \ \ t \ge 0.
\end{align}
From either the discussion in Section 6 of \cite{Global_Analytic} or a direct computation, we know that the family $(\Phi_t)_{t \ge 0}$ satisfies the eikonal equation
\begin{align}
	\begin{cases}
		\p_t \Phi_t(z) + \textrm{Re} \, \tilde{q}\left(z, \frac{2}{i} \Phi'_{t,z}(z) \right) = 0, \ \ z \in \C^n, \ t \ge 0, \\
		\left. \Phi_t \right|_{t=0} = \Phi_0 \ \textrm{on} \ \C^n.
	\end{cases}
\end{align}
As a consequence of (\ref{mapping_I_Lagrangians}), for every $z \in \C^n$ and $t \ge 0$, there is a point $Z \in \Lambda_{\Phi_0}$ such that
\begin{align}
	\left(z, \frac{2}{i} \Phi'_{t,z}(z) \right) = \tilde{\kappa}_t (Z). 
\end{align}
Since $\tilde{q}$ is invariant under the flow $\tilde{\kappa}_t$, for every $t \ge 0$ and $z \in \C^n$, there is $Z \in \Lambda_{\Phi_0}$ such that
\begin{align}
	\p_t \Phi_t(z) = -\textrm{Re} \, \tilde{q}(Z).
\end{align}
Because $\textrm{Re} \, q \ge 0$, (\ref{image real phase space}) and (\ref{def q tilde}) imply that $\textrm{Re} \, \tilde{q} \ge 0$ on $\Lambda_{\Phi_{0}}$, and we have
\begin{align}
	\p_t \Phi_t(z) \le 0, \ \ z \in \C^n, \ t \ge 0.
\end{align}
Thus, for any fixed $z \in \C^n$, the function
\begin{align}
	t \mapsto \Phi_0(z) - \Phi_t(z)
\end{align}
is non-decreasing. It follows that $R_t \ge 0$ for all $t \ge 0$ and that the function $\alpha$ is non-decreasing.

We next recall from Proposition 6.1 of \cite{Global_Analytic} that
\begin{align} \label{FBI_side_characterization_singular_space}
	\Lambda_{\Phi_0} \cap \Lambda_{\Phi_t} = \pi_1 \left(\kappa_{\varphi}(S) \right), \ \ t > 0,
\end{align}
where $S$ is the singular space of $q$, $\kappa_{\varphi}: \C^{2n} \rightarrow \C^{2n}$ is the complex linear canonical transformation defined by (\ref{FBI canonical transformation}), and $\pi_1: \C^{2n} \rightarrow \C^n$ is the projection $\pi_1: (z,\zeta) \mapsto z$. Since we assume that $S = \{0\}$, we deduce from (\ref{FBI_side_characterization_singular_space}) that
\begin{align}
	\Lambda_{\Phi_0} \cap \Lambda_{\Phi_t} = \{0\}, \ \ t>0.
\end{align}
Thus, for every $t>0$ and $z \in \C^n$,
\begin{align}
	\frac{2}{i} \Phi'_{0, z}(z) - \frac{2}{i} \Phi'_{t,z}(z) = 0 \iff z = 0.
\end{align}
Because $R_t$ is a non-negative quadratic form for each $t \ge 0$, we have
\begin{align}
	R_t(z) = 0, \ z \in \C^n, \ t>0 \iff \nabla_{\textrm{Re} \, z, \textrm{Im} \, z}  R_t(z) = 0 \iff \frac{2}{i} \Phi'_{0, z}(z) - \frac{2}{i} \Phi'_{t,z}(z) = 0.
\end{align}
Hence, for any $z \in \C^n$ and $t>0$,
\begin{align}
	R_t(z) = 0 \iff z = 0.
\end{align}
Thus $\alpha(t)>0$ for all $t>0$.

To establish (\ref{short_time_lower_bound}),  we recall the main result of Section 2 of \cite{SubellipticEstimates}, which states that if the singular space of $q$ is trivial, $S = \{0\}$, then there is a small time $0<t_0 \ll 1$ and a constant $c>0$ such that
\begin{align}
	R_t(z) \ge c t^{2k_0+1} \abs{z}^2, \ \ z \in \C^n, \ 0 \le t \le t_0,
\end{align}
where $k_0 \in \{0,1, \ldots, 2n-1\}$ is the smallest non-negative integer such that (\ref{def_k_0}) holds. It is therefore true that
\begin{align}
	\alpha(t) \ge c t^{2k_0+1}, \ \ 0 \le t \le t_0.
\end{align}

To prove the claim regarding the behavior of $\alpha(t)$ as $t \rightarrow \infty$, we note that (\ref{isospectrality_Hamilton_map}) implies that $\textrm{spec}(iM) \subset \{\textrm{Re} \, \lambda<0\}$. Thus there is $c>0$ such that
\begin{align}
	R_t(z) = \Phi_0(z) + \mathcal{O}(e^{-ct} \abs{z}^2) \ \textrm{as} \ t \rightarrow \infty.
\end{align}
It follows that
\begin{align}
	\alpha(t) \rightarrow \textrm{min}_{\abs{z}=1} \Phi_0(z) \ \textrm{as} \ t \rightarrow \infty.
\end{align}
The proof of Point 2 is complete.
 
Finally, we prove (\ref{fundamental_estimate}). Using (\ref{full_form_weight}), (\ref{explicit_polarization}), (\ref{explicit_form_psi_t}), and (\ref{explicit_form_phi_t}), we obtain the following identity by elementary algebraic manipulations:
\begin{align}
\begin{split}
	2 \, \textrm{Re} \, \Psi_t(z,\overline{w}) - \Phi_t(z) - \Phi_0(w) = -\Phi''_{0, \overline{z} z} \left(w - e^{i M t} z \right) \cdot \overline{\left(w - e^{i M t} z \right)}, \ \ z, w \in \C^n, \ t \ge 0.
\end{split}
\end{align}
Because $\Phi_0$ is a strictly plurisubharmonic quadratic form, the Levi matrix $\Phi''_{0, \overline{z} z}$ is Hermitian positive-definite. Consequently, there are constants $C, c>0$, independent of $t$, such that
\begin{align}
	-C \abs{w - e^{it M} z}^2 \le 2 \, \textrm{Re} \, \Psi_t(z,\overline{w}) - \Phi_t(z) - \Phi_0(w) \le -c \abs{w-e^{itM} z}^2, \ \ z,w \in \C^n, \ t \ge 0.
\end{align}
This proves (\ref{fundamental_estimate}).
\end{proof}

\section{The Conclusion of the Proof of Theorem \ref{main_theorem}}

In view of (\ref{conjugated_the_semigroup}), (\ref{FBI_transform_associated_to_phi}), (\ref{the_good_Bergman_form}), and (\ref{solution_transport_equation}), the Schwartz kernel $K_t(x,y)$ of $G(t)$ is given, formally, by
\begin{align} \label{expression_Schwartz_kernel}
	K_t(x,y) = c_\varphi^2 C_{\Phi_0} e^{\frac{i}{2}\trace{M}t} \int_{\C^n} \int_{\C^n} e^{P_t(x,y,z,w)} \, L(dw) \, L(dz), \ \ (x,y) \in \R^{2n}, \ \ t \ge 0,
\end{align}
where
\begin{align}
	P_t(x,y,z,w) := -i \overline{\varphi(z,x)}-2\Phi_0(z)+2 \Psi_t(z,\overline{w})-2\Phi_0(w)+i\varphi(w,y),
\end{align}
for $x,y \in \R^n$, $z,w \in \C^n$, and $t \ge 0$. For $z \in \C^n$, let $r(z) \in \R^n$ be the unique point such that
\begin{align}
	\Phi_0(z) = -\textrm{Im} \ \varphi(z, r(z)).
\end{align}
Since $\textrm{Im} \ \varphi''_{yy}>0$, there is $c>0$ such that
\begin{align} \label{estimate_FBI_phase}
	-\textrm{Im} \ \varphi(z,y) - \Phi_0(z) \le -c \abs{y-r(z)}^2, \ \ z \in \C^n, \ y \in \R^n.
\end{align}
Using (\ref{estimate_FBI_phase}) together with the estimate (\ref{fundamental_estimate}), we find that
\begin{align} \label{intermediate_estimate_P}
	\textrm{Re} \ P_t(x,y,z,w) \le -c \abs{x-r(z)}^2 - R_t(z) - c \abs{w-e^{itM} z}^2-c\abs{y-r(w)}^2,
\end{align}
for all $x,y \in \R^n$, $z,w \in \C^n$, and $t \ge 0$, where $R_t(z)$ is as in Proposition \ref{Proposition_3.1}. Let $\alpha: [0,\infty) \rightarrow \R$ be as in (\ref{def_alpha_t}). Since (\ref{bound_for_R_t}) holds, there is $c>0$ such that
\begin{align} \label{general_bound_phase_function}
	\textrm{Re} \, P_t(x,y,z,w) \le -c \abs{x-r(z)}^2-\alpha(t) \abs{z}^2 - c \abs{w - e^{itM}z}^2 - c \abs{y-r(w)}^2
\end{align}
for all $x,y \in \R^n$, $z,w \in \C^n$, and $t \ge 0$. 

Let $\gamma$ be as in the statement of Theorem \ref{main_theorem}. Taking the absolute value of (\ref{expression_Schwartz_kernel}) and using (\ref{general_bound_phase_function}) and (\ref{isospectrality_Hamilton_map}), we find that there are constants $C, c>0$ such that
\begin{align} \label{pointwise_bound_Schwartz_kernel}
	\abs{K_t(x,y)} \le C e^{-\gamma t} \int_{\C^n} \int_{\C^n} e^{-c \abs{x-r(z)}^2-\alpha(t) \abs{z}^2 - c \abs{w - \exp{(itM)}z}^2 - c \abs{y-r(w)}^2} \, L(dw) \, L(dz)
\end{align}
for every $x,y \in \R^n$ and $t \ge 0$. Let $1 \le p \le q \le \infty$ be given, and let $1 \le r \le \infty$ be such that
\begin{align}
	1+\frac{1}{q} = \frac{1}{p}+\frac{1}{r}.
\end{align}
Using Minkowski's integral inequality and the fact that $\alpha(t)>0$ for every $t>0$, we get that
\begin{align} \label{first_bound_integral_inequality}
\begin{split}
	\norm{K_t(x,\cdot)}_{L^r} &\le C e^{-\gamma t} \int_{\C^n} \int_{\C^n} e^{-c \abs{x-r(z)}^2-\alpha(t) \abs{z}^2-c \abs{w-\exp{(itM)}z}^2} \norm{e^{-c \abs{y-r(w)}^2}}_{L^r_y} \, L(dw) \, L(dz) \\
	&\le C \alpha(t)^{-n} e^{-\gamma t}, \ \ x \in \R^n, \ t>0,
\end{split}
\end{align}
where $C=C_{p,q}>0$ depends only on $p$ and $q$. By similar reasoning, there is $C=C_{p,q}>0$ such that
\begin{align} \label{second_bound_integral_inequality}
	\norm{K_t(\cdot, y)}_{L^r} \le C \alpha(t)^{-n} e^{-\gamma t}, \ \ y \in \R^n, \ t>0.
\end{align}
Applying Young's integral inequality with (\ref{first_bound_integral_inequality}) and (\ref{second_bound_integral_inequality}) gives
\begin{align} \label{bound_for_G(t)}
	\norm{G(t)}_{L^p \rightarrow L^q} \le C \alpha(t)^{-n} e^{-\gamma t}, \ \ t>0,
\end{align}
for some $C = C_{p,q}>0$.

Let $\epsilon>0$ be arbitrary. From Proposition \ref{Proposition_3.1}, we know that $\alpha$ is non-decreasing and $\alpha(t)>0$ for all $t>0$. Thus,
\begin{align}
	\alpha(t) \ge \alpha(\epsilon), \ \ t \ge \epsilon.
\end{align}
In view of (\ref{bound_for_G(t)}), we may deduce that there is $C=C_{\epsilon, p, q}>0$ such that
\begin{align} \label{exponential_decay_bound}
	\norm{G(t)}_{L^p \rightarrow L^q} \le C e^{-\gamma t}, \ \ t \ge \epsilon.
\end{align}
To see that the bound (\ref{exponential_decay_bound}) is sharp as $t \rightarrow \infty$, we recall from Theorem 2.1 of \cite{return2equilibrium} that the lowest eigenvalue $\rho$ of $q^w(x,D)$, introduced in (\ref{def_rho}), is simple and that the eigenspace of $q^w(x,D)$ corresponding to $\rho$ is spanned by a `ground state' of the form
\begin{align}
	u_0(x) = e^{-a(x)}, \ \ x \in \R^n,
\end{align}
where $a$ is a complex-valued quadratic form on $\R^n$ with positive-definite real part $\textrm{Re} \, a >0$. Let $v = \norm{u_0}_{L^p(\R^n)}^{-1} u_0$. Since $q^w(x,D) v = \rho v$, 
is is clear that
\begin{align}
	\norm{e^{-tq^w(x,D)} v}_{L^q} = e^{-t \gamma } \norm{v}_{L^q}, \ \ t \ge 0.
\end{align}
Hence there is a constant $c=c_{p,q}>0$ such that
\begin{align} \label{proving_sharpness}
	\norm{e^{-tq^w(x,D)}}_{L^p \rightarrow L^q} \ge c e^{-\gamma t}, \ \ t \ge 0.
\end{align}
We conclude that there are constants $C = C_{\epsilon, p,q}>0$ and $c = c_{p,q}>0$ such that (\ref{sharp_exponential_decay}) holds for all $t \ge \epsilon$.

Finally, we prove the bound (\ref{short_time_bound}). From (\ref{short_time_lower_bound}), (\ref{bound_for_G(t)}), and (\ref{proving_sharpness}), we get that there are constants $C = C _{p,q}>0$ and $c_{p,q}>0$ such that
\begin{align} \label{really course bound}
	c \le \norm{G(t)}_{L^p \rightarrow L^q} \le C t^{-(2k_0+1)n}, \ \ 0<t \le t_0.
\end{align}
The proof of Theorem \ref{main_theorem} is complete.

\bibliographystyle{plain}
\bibliography{references}

\end{document}